\documentclass[12pt]{article}
\usepackage{amsfonts,amsmath,amssymb}
\usepackage{epsfig}
\usepackage{latexsym}
\usepackage{color}
\usepackage{tikz}
\usepackage{amsthm}
\usepackage{hyperref}
\usepackage{ifthen}

\textwidth=6.5in \textheight=8.5in \evensidemargin=0in
\oddsidemargin=0in \topmargin=0in \topskip=0pt \baselineskip=12pt
\parindent=2em

\newtheorem{theorem}{Theorem}
\newtheorem{lemma}[theorem]{Lemma}

\newtheorem{cor}[theorem]{Corollary}
\newtheorem{proposition}[theorem]{Proposition}

\newtheorem{problem}{Problem}

\def\vertex(#1){\put(#1){\circle*{2}}}
\def\vertexo(#1){\put(#1){\circle{2}}}
\def\vert(#1){\put(#1){\circle*{1.5}}}
\def\verto(#1){\put(#1){\circle{1.5}}}
\def\lab(#1)#2{\put(#1){\makebox(0,0)[c]{#2}}}
\setlength{\unitlength}{1mm}

% Now use official proof context.  So the next were already defined there.
%\newenvironment{proof}[1][Proof]{\textbf{#1.} }{\ \rule{0.5em}{0.5em}}
%\newcommand{\qed}{\hfill $\Box$}

%\newcommand{\ird} [1] {\gamma_{i{#1}}}

%\newcommand{\dim}{{\rm dim}}
\newcommand{\edim}{{\rm edim}}

 % Riste

%\tikzset{My Style/.style={draw, circle, fill=black,scale=0.3}} %za risanje grafov-stil vozlišč

%%%%%%%%%%%%%%%%%%%%%%%%%%%%%%%%%%%%%%%%%%%%%%%%%%%%%%%%%%%%%%

\title{A note on the metric and edge metric dimensions of 2-connected graphs
\footnotetext{\small\tt knor@math.sk,
skrekovski@gmail.com, ismael.gonzalez@uca.es}}

\author{Martin Knor$^1$, Riste \v Skrekovski$^{2,3}$ and Ismael G. Yero$^4$\\[0.3cm]
\small $^1$ \it Slovak University of Technology in Bratislava, Bratislava, Slovakia \\[0.1cm]
\small $^2$ \it University of Ljubljana, FMF, 1000 Ljubljana, Slovenia \\[0.1cm]
\small $^3$ \it Faculty of Information Studies, 8000 Novo Mesto, Slovenia \\[0.1cm]
\small $^4$ \it Universidad de C\'adiz, Departamento de Matem\'aticas, Algeciras, Spain\\
}

\begin{document}
\maketitle

\begin{abstract}
For a given graph $G$, the metric and edge metric dimensions of $G$,
$\dim(G)$ and $\edim(G)$, are the cardinalities of the smallest possible
subsets of vertices in $V(G)$ such that they uniquely identify
the vertices and the edges of $G$, respectively, by means of distances.
It is already known that metric and edge metric dimensions are not
in general comparable.
Infinite families of graphs with pendant vertices in which
the edge metric dimension is smaller than the metric dimension are
already known.
In this article, we construct a 2-connected graph $G$ such that
$\dim(G)=a$ and $\edim(G)=b$ for every pair of integers $a,b$,
where $4\le b<a$.
For this we use subdivisions of complete graphs, whose metric
dimension is in some cases smaller than the edge metric dimension.
Along the way, we present an upper bound for the metric and edge
metric dimensions of subdivision graphs under some special conditions.
%, and give a lower bound for the metric and edge metric dimensions of
%a graph $G$ knowing that ones of a given subgraph of $G$.
% MK: I did not understand this, so I removed it.
\end{abstract}

{\it Keywords:} Edge metric dimension; metric dimension; 2-connected
graphs; edge subdivision.
%torus graphs.

{\it AMS Subject Classification numbers:}   05C12; 05C76

%Distances, Wiener indices, eccentricity indices, apex trees
%% keywords here, in the form: keyword \sep keyword

%% PACS codes here, in the form: \PACS code \sep code

%% MSC codes here, in the form: \MSC code \sep code
%% or \MSC[2008] code \sep code (2000 is the default)

% ------------------------------------------------------------------------------------

\section{Introduction}

The definition of edge metric dimension of graphs appeared first
in \cite{Kel} motivated by the problem of uniquely locating
the connection between two vertices of a graph, that is the edge;
and in addition, to giving more insight into the classical metric
dimension.

Given a connected graph $G$, a vertex $v\in V(G)$ and an edge $e=xy\in E(G)$,
the \emph{distance} between $e$ and $v$ is $d_G(e,v)=\min\{d_G(x,v),d_G(y,v)\}$,
where $d_G(x,v)$ stands for the standard vertex distance in graphs,
\emph{i.e.}, the length of a shortest $x$-$v$ path.
A set of vertices $S\subset V(G)$ is an \emph{edge metric generator} for $G$
if for any pair of distinct edges $e,f\in E(G)$, there exists a vertex
$v\in S$ such that $d_G(e,v)\ne d_G(f,v)$.
The cardinality of the smallest possible edge metric generator for $G$ is
the \emph{edge metric dimension} of $G$, denoted by $\edim(G)$.
An edge metric generator of cardinality $\edim(G)$ is
an \emph{edge metric basis} of $G$.
Any edge metric generator uniquely identifies (resolves or recognizes)
all the edges of the graph. Concepts of metric generators, metric basis
and metric dimension have analogous definitions, if we want to uniquely
recognize the vertices of a graph instead of the edges.
These three latter concepts were firstly and independently defined in \cite{Harary1976}
and \cite{Slater1975}.
The metric dimension of a graph $G$ is denoted by $\dim(G)$.
Recent results concerning edge metric dimension of graphs can be found
in \cite{Filipovi2019,Geneson2020,Peterin2020,Zhu,Zubrilina}.

Relationships between the metric dimension and the edge metric dimension
of graphs have became attracting for some researchers from the moment
in which the seminal article \cite{Kel} on edge metric dimension appeared.
One reason for this attraction is based on the not clear comparability
of such parameters.
That is, the classical metric dimension can be smaller, equal or larger
than the edge metric dimension as first proved in \cite{Kel}.
Several different infinite families of graphs having metric dimension
smaller or equal than the edge metric dimension were already known
from \cite{Kel}, and in contrast, only one infinite family
(the torus graphs $C_{4r}\square C_{4t}$) with metric dimension larger
than the edge metric dimension was known till recently.
Moreover, for this family the difference between the metric and edge metric
dimensions is just $1$.
Thus, a natural question raised up in \cite{Kel} was regarding finding
some other families of graphs $G$ satisfying that $\edim(G)<\dim(G)$.
From \cite{Zubrilina}, it was also known that the edge metric dimension
cannot be bounded from above by a factor of the metric dimension,
and thus, a similar question was there stated for the opposite direction.

The problem of finding some other families of graphs having edge metric
dimension smaller than the metric dimension remained open till recently,
when it was shown in \cite{Knor2020}, that for every $a$ and $b$,
where $2\le b<a$, there is a graph $G$ such that $\dim(G)=a$ and $\edim(G)=b$.
Hence, the metric dimension cannot be bounded from above by a factor of
the edge metric dimension as well, giving answer to problems described
in \cite{Kel} and \cite{Zubrilina}.
Notwithstanding, the graphs used in \cite{Knor2020} have pendant vertices,
which is not happening for the already known torus graphs from \cite{Kel}.
Thus, one may consider the question of finding families of graphs $G$
for which $\edim(G)<\dim(G)$, and having no vertices of degree one. Even more general, one would wonder if there is a relationship between constructing such kind of families of graphs regarding the $k$-connectedness, for $k\ge 2$, of the graphs of such families.
In this work we center our attention in the case of $2$-connected graphs
(graphs without cut vertices), also known as \emph{block graphs},
and we prove the following statement, which is the main result of this work.

\begin{theorem}
\label{thm:main}
Let $a$ and $b$ be integers, such that $4\le b<a$.
Then, there exists a $2$-connected graph $G$ such that $\dim(G)=a$ and
$\edim(G)=b$.
\end{theorem}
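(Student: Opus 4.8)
The plan is to realise the prescribed pair $(\dim(G),\edim(G))=(a,b)$ inside a single $2$-connected graph built around a subdivided complete graph. I would take as a backbone the graph $S(K_n)$ obtained from $K_n$ by subdividing each edge once, with branch vertices $u_1,\dots,u_n$ and subdivision vertices $w_{ij}$ on the edge $u_iu_j$. This graph is $2$-connected and has minimum degree $2$, so it already avoids the pendant vertices of the earlier constructions. The distances are completely explicit: $d(u_i,u_j)=2$, $d(u_i,w_{ij})=1$, $d(u_i,w_{jk})=3$ for $i\notin\{j,k\}$, while two subdivision vertices are at distance $2$ or $4$ according as they share a branch vertex or not. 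The first step is to read off the two dimensions of the backbone from these distances, using the upper bound for subdivision graphs proved above for the bookkeeping.

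For the edge metric dimension I would use the observation that, measured from the branch vertices, the edge $u_iw_{ij}$ has distance $0$ to $u_i$, distance $1$ to $u_j$ and distance $2$ to every other branch vertex; hence its distance vector encodes the \emph{ordered} pair $(i,j)$, and any $n-1$ branch vertices already resolve all edges. Conversely the two edges lying on a common subdivided edge force any branch-based generator to meet every pair $\{i,j\}$, i.e.\ to be a vertex cover of $K_n$, which pins the value down; the same vectors read on vertices control $\dim(S(K_n))$, the bottleneck being the mutually equidistant branch vertices. Choosing $n$ appropriately then fixes the edge metric dimension of the backbone at the target value $b$, and the hypothesis $b\ge 4$ reflects the smallest backbone for which the subsequent enlargement can be carried out.

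The decisive step is to enlarge the backbone so as to push $\dim$ up to $a$ while leaving $\edim$ equal to $b$. Here I would attach a controlled number of further subdivision gadgets (again subdivided complete pieces) sharing branch vertices with the backbone, gluing them so that $2$-connectivity and minimum degree $2$ are preserved, and so that each new piece adds a cluster of vertices that is resolvable only from within while its edges remain resolved by a generator already living on the backbone. The point to exploit is that edge distances are computed as a minimum over the two endpoints, so the $0/1/2$ orientation pattern that makes the edges cheap to resolve survives the enlargement, whereas the growing family of mutually equidistant vertices makes the vertices progressively more expensive; summing these contributions and invoking the subdivision upper bound yields $\dim(G)\le a$ and $\edim(G)\le b$.

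For the matching lower bounds I would argue, for the metric dimension, that each added cluster contains a set of vertices no external landmark can tell apart, forcing one extra basis element per cluster on top of the $b$ already demanded by the backbone, so that $\dim(G)\ge a$; and for the edge metric dimension, that the explicit branch generator of size $b$ resolves every edge of $G$, while the vertex-cover obstruction keeps it from shrinking, so that $\edim(G)=b$. The step I expect to be the main obstacle is precisely this decoupling: exact metric twins are necessarily adjacent to the same vertices and therefore make their incident edges equally hard to separate, so the extra metric dimension cannot come from local twins but must be engineered from global near-symmetry. Verifying that the gadgets raise $\dim$ by exactly the intended amount, that they do not create unresolved edges, and that shortest paths routed through the glued branch vertices do not disturb the clean distance pattern of the backbone, will require the most careful case analysis and is where the construction has to be tuned.
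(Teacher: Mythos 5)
Your overall architecture mirrors the paper's (a subdivided-complete-graph ``backbone'' with edge metric dimension $b$, chained to gadgets that raise the metric dimension one unit at a time), but the proposal has a genuine gap at its foundation: your computation of the two dimensions of $S(K_n)$ is wrong, and with it the engine of the whole construction disappears. Your lower-bound argument (``any branch-based generator must be a vertex cover of $K_n$'') only constrains generators consisting of \emph{branch} vertices; it does not pin down $\edim(S(K_n))$, because optimal generators live mostly on the \emph{subdivision} vertices $w_{ij}$. In fact $\edim(S(K_n))=\lceil\frac{2n-2}{3}\rceil$ and $\dim(S(K_n))=\lceil\frac{2n}{3}\rceil$ (for $n\ne 5$), both well below the $n-1$ your picture suggests, and establishing these lower bounds is the hard part of the paper (an auxiliary-graph counting argument over mixed generators). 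Two concrete consequences for your proof: first, your ``explicit branch generator of size $b$'' would not be a basis, so $\edim(G)=b$ fails ($\edim$ would be strictly smaller); second, and more fundamentally, in your branch-vertex picture $S(K_n)$ has \emph{equal} metric and edge metric dimensions (a set of $n-1$ branch vertices resolves vertices just as it resolves edges, and the same vertex-cover obstruction applies to both), so your backbone and your gadgets exhibit no $\dim$/$\edim$ gap at all. But such a gap in the gadget is indispensable: any cut-type lower bound of the form ``external landmarks cannot resolve internal pairs'' (the paper's Lemma~\ref{lem:bound_sequence}) applies symmetrically to vertices and to edges, forcing at least $\dim(H)-t$ resp.\ $\edim(H)-t$ internal landmarks per gadget $H$ with $t$ attachment vertices. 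Hence a gadget can raise $\dim$ without raising $\edim$ only if $\dim(H)>\edim(H)$ for the gadget itself --- precisely the fact your analysis cannot produce.

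The second, smaller gap is that the per-gadget accounting (``each cluster forces one extra basis element for $\dim$ but none for $\edim$'') is asserted rather than proved. Even granting gadgets with $\dim(H)=\edim(H)+1$ (the paper uses $S(K_7)$, with values $5$ and $4$), you still need the matching upper bounds: an edge metric generator placed only on the two extreme blocks must resolve all edges of the intermediate blocks and of the connecting edges, which requires verifying that the distance partition induced by a far-away landmark reconstructs the partition induced by the block's own attachment vertex. Your gluing by \emph{identifying} branch vertices makes this harder to control, since it changes distances inside the blocks; the paper instead joins consecutive blocks by new edges between subdivision vertices, which keeps each block's internal metric intact and makes the reconstruction argument clean. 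As written, your proposal would need the paper's Theorem~\ref{thm:SG}-type lower bound analysis and its Lemma~\ref{lem:bound_sequence} to be supplied before either equality $\dim(G)=a$ or $\edim(G)=b$ could be concluded.
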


It remains unsolved if there is an analogue of Theorem~{\ref{thm:main}} when $b=2$ or $b=3$.
So, we point out the following open problem. Note that a very particular case $b=2$ and $a=3$
follows by Lemma~\ref{lem:difference1}.

\begin{problem}
Are there graphs $G$ with $\edim(G)=b$ and $\dim(G)=a$ for each $b\in\{2,3\}$ and $a>b$?
\end{problem}

Observe that the torus graphs are $3$-connected. In this sense, if there exist $3$-connected graphs $G$ with $\edim(G)=6$ and
$\dim(G)=7$, in which some edge metric basis is a subset of a metric basis, then by using the technique presented in this paper, and in \cite{Knor2020}, there may be a chance to construct other $3$-connected graphs satisfying similar conclusions as in
Theorem~{\ref{thm:main}} (probably with a little bit worse lower bound on $b$), for this particular case.
Unfortunately, at the moment the only known $3$-connected graphs whose edge
metric dimension is smaller than the metric dimension are the above
mentioned torus graphs, for which $\dim(G)=4$ and $\edim(G)=3$.
In consequence, we next state the following problem.

\begin{problem}
Let $b\ge6$. Are there $3$-connected graphs $G_b$ such that $\dim(G_b)=b+1$ and
$\edim(G_b)=b$ which have an edge metric basis which is a subset of some
metric basis?
\end{problem}

And more generally:

\begin{problem}
For which $k$ there exist $k$-connected graphs $G$ with $\dim(G)>\edim(G)$?
\end{problem}

In the next section, we present several results that are then used to prove Theorem \ref{thm:main}.

% ------------------------------------------------------------------------------------

\section{Metric and edge dimensions of subdivision graphs}

The smallest value of $n$, $n\ge 3$, for which there exist graphs of order $n$
with edge metric dimension smaller than the metric dimension,
is $n=10$, see \cite{Knor2020}.
From \cite[Figure 1]{Knor2020}, we observe that there is just one 2-connected
graph with such order, which is indeed a subdivision of $K_4$.
In this sense, we consider the class of subdivision graphs, since it seems
that they could be good candidates for graphs which have edge metric dimension
strictly smaller than the metric dimension.

Given a graph $G$, by \emph{subdividing} one edge $uv$, we mean removing
the edge $uv$, and adding an extra vertex $w$ and the edges $uw$ and $wv$.
A \emph{subdivision graph} is obtained from a graph $G$ by subdividing all
its edges, and is denoted by $S(G)$.
Note that, if $G$ has $n$ vertices and $m$ edges, then $S(G)$ is a bipartite
graph with $n+m$ vertices and $2m$ edges.
In order to prove Theorem~{\ref{thm:main}}, we first need the following results,
which are bounds for the metric and edge metric dimensions of subdivision graphs,
under some restrictions for the original not subdivided graphs.
The bounds themselves are of interest based on the fact that tight useful bounds
for the metric and edge metric dimensions of graphs are not commonly existing.

\begin{theorem}
\label{thm:bound}
Let $G$ be a graph on $n$ vertices.
If $G$ contains $\left\lfloor\frac n3\right\rfloor$ vertex-disjoint paths of length
$2$, then $\dim(S(G))\le\left\lceil\frac{2n}{3}\right\rceil$.
\end{theorem}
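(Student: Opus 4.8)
The plan is to exhibit an explicit metric generator of $S(G)$ of size $\lceil 2n/3\rceil$. Write the $\lfloor n/3\rfloor$ vertex-disjoint paths as $u_i\,x_i\,v_i$ for $1\le i\le\lfloor n/3\rfloor$, and in $S(G)$ let $p_i$ and $q_i$ denote the vertices subdividing the edges $u_ix_i$ and $x_iv_i$; for each edge $e$ of $G$ write $w_e$ for the vertex subdividing it. First I would take $W_0=\{p_i,q_i:1\le i\le\lfloor n/3\rfloor\}$, a set of $2\lfloor n/3\rfloor$ subdivision vertices, and then adjoin, for each of the at most two vertices $y$ of $G$ lying on none of the paths, one subdivision vertex $w_{yz}$ incident with $y$. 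The resulting set $W$ has size $2\lfloor n/3\rfloor+(n-3\lfloor n/3\rfloor)=n-\lfloor n/3\rfloor=\lceil 2n/3\rceil$, so it remains to prove that $W$ resolves $S(G)$.

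The first simplification I would record is a parity observation. Since $S(G)$ is bipartite with the original vertices in one part and the subdivision vertices in the other, and every element of $W$ is a subdivision vertex, the distance in $S(G)$ from any landmark to an original vertex is odd while the distance to a subdivision vertex is even. Hence no original vertex can share a distance vector with a subdivision vertex, and it suffices to resolve the two classes separately. For the computations I would use the formula $d_{S(G)}(p_i,z)=1+2\,d_G(u_ix_i,z)$ for an original vertex $z$, together with the fact that, for an edge $e=ab$, every landmark $\ell$ satisfies $d_{S(G)}(\ell,w_e)=1+\min\{d_{S(G)}(\ell,a),d_{S(G)}(\ell,b)\}$, so that the distance vector of $w_e$ is the coordinatewise minimum of those of $a$ and $b$, increased by one.

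For the original vertices I would argue through their \emph{distance-one pattern}, that is, the set of landmarks adjacent to them. A landmark $p_i$ is adjacent precisely to $u_i$ and $x_i$, and $q_i$ precisely to $x_i$ and $v_i$; by disjointness of the paths this gives $u_i$ the pattern $\{p_i\}$, $v_i$ the pattern $\{q_i\}$, and $x_i$ the pattern $\{p_i,q_i\}$, while each off-path vertex $y$ is detected by the landmark adjoined for it. Since the distance-one entries are visible in the distance vector and the patterns just listed are pairwise distinct, every pair of original vertices is resolved.

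The real work, and the step I expect to be the main obstacle, is resolving the subdivision vertices. By the minimum formula, the landmarks at distance $2$ from $w_e$ (for $e=ab$) are exactly those adjacent to $a$ or to $b$, so the \emph{distance-two pattern} of $w_e$ is the union of the distance-one patterns of $a$ and $b$. I would then show that this union determines $e$: reading the pattern path by path recovers, for each $i$, whether $e$ meets $\{u_i,x_i\}$ and whether it meets $\{x_i,v_i\}$, and a short case analysis according to whether the two endpoints of $e$ lie on the same path or on different ones recovers $\{a,b\}$. The delicate point is the pattern $\{p_i,q_i\}$, which can arise either from an endpoint equal to $x_i$ or from the undivided edge $u_iv_i$; here I would use that the subdivision vertices of the path-edges themselves lie in $W$ (so are recognised by a zero coordinate) to rule out the spurious coincidences, and would treat the at most two off-path vertices, that is the cases $n\equiv1,2\pmod 3$, by direct inspection, noting that the extra landmarks only refine the distance vectors. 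Combining the three classes yields that $W$ is a metric generator, whence $\dim(S(G))\le\lceil 2n/3\rceil$.
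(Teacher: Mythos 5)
Your construction diverges from the paper's at exactly the point where it breaks. The paper's generator also consists of the subdivision vertices of the path edges, but for the at most two leftover vertices it adds those \emph{original} vertices themselves; you instead add an arbitrarily chosen subdivision vertex $w_{yz}$ incident with each leftover vertex $y$, and with an arbitrary choice of $z$ your set $W$ can fail to be a metric generator. Take $G=K_4$ with vertex set $\{u,x,v,y\}$ and the path $u\,x\,v$ (so $n=4$, $\lfloor n/3\rfloor=1$, $\lceil 2n/3\rceil=3$), and choose the landmark for the off-path vertex $y$ to be $w_{yu}$, giving $W=\{w_{ux},w_{xv},w_{yu}\}$. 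Then the subdivision vertices $w_{uv}$ and $w_{yx}$ both have distance vector $(2,2,2)$: every landmark shares an endpoint-neighbour with $w_{uv}$ (namely $u$ or $v$) and also with $w_{yx}$ (namely $x$ or $y$). So $W$ does not resolve $S(K_4)$, and correspondingly your key claim --- that the distance-two pattern, i.e.\ the union of the endpoints' distance-one patterns, determines the edge --- is false: both $uv$ and $yx$ meet $\{u,x\}$, meet $\{x,v\}$, and meet $\{y,u\}$. Neither of your escape hatches applies here: neither $w_{uv}$ nor $w_{yx}$ is a landmark, so there is no zero coordinate to exploit, and no ``direct inspection'' can succeed, because the full distance vectors, not merely the patterns, coincide. (A smaller slip of the same kind: the distance-one pattern of $u_i$ need not be $\{p_i\}$, since an adjoined landmark $w_{yz}$ may have $z=u_i$; and if the two leftover vertices are adjacent they could even pick the same landmark, spoiling your cardinality count.)

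The damage is confined to the cases $n\equiv 1,2\pmod 3$: when $3\mid n$ there are no adjoined landmarks, every vertex has pattern $\{p_i\}$, $\{q_i\}$ or $\{p_i,q_i\}$, and your parity/pattern/zero-coordinate argument does go through, closely paralleling the paper's Cases 1 and 2. To repair the remaining cases you would either have to prove that a \emph{suitable} choice of $z$ always exists (in the example above $z=x$ works, but you give no selection rule, and showing one always exists is a genuine further argument), or do what the paper does: put the leftover original vertices themselves into the generator. Such a landmark is at odd distance from every subdivision vertex, and in the example $d(y,w_{yx})=1$ while $d(y,w_{uv})=3$, which resolves precisely the pair that defeats your set.
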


\begin{proof}
We construct a set $T$ of the required cardinality and show that
such $T$ is a metric generator for $S(G)$.
Denote the vertices of $G$ by $v_1,v_2,\dots,v_n$, so that the $\left\lfloor\frac n3\right\rfloor$ vertex-disjoint paths
of length $2$ are $v_1v_2v_3$, $v_4v_5v_6$,$\dots$,
$v_{\alpha-2}v_{\alpha-1}v_{\alpha}$, where $\alpha=3\lfloor\frac n3\rfloor$.
Further, if $v_iv_j\in E(G)$, then denote by $x_{i,j}$ the vertex of degree
$2$ connected to $v_i$ and $v_j$ in $S(G)$.
Let $T'=\{x_{1,2},x_{2,3},x_{4,5},x_{5,6},\dots,x_{\alpha-2,\alpha-1},x_{\alpha-1,\alpha}\}$.
Now, let $T=T'$ if $n\equiv 0\pmod 3$, $T=T'\cup\{v_n\}$ if $n\equiv 1\pmod 3$,
and $T=T'\cup\{v_{n-1},v_n\}$ if $n\equiv 2\pmod 3$.
Obviously, $|T|=\lceil\frac{2n}3\rceil$.
So, it remains to show that $T$ is a metric generator for $S(G)$.

Consider the vertex $x_{1,2}$ of $T$.
This vertex partitions the set of vertices of $S(G)$ into several sets
according to their distance from $x_{1,2}$.
Clearly, $x_{1,2}$ distinguishes any two vertices belonging
to two different such sets.
Thus, it suffices to examine pairs of vertices, say $y$ and $z$,
belonging to a common set.
Since there is only one vertex at distance $0$ from $x_{1,2}$, we consider
the sets of vertices at distance $1$ and $2$ from $x_{1,2}$.
As will be shown below, it is not necessary to consider sets of vertices
at larger distance from $x_{1,2}$.

\bigskip
\noindent
{\bf Case 1}: $d(x_{1,2},y)=d(x_{1,2},z)=1$.
Then $y=v_1$ and $z=v_2$.
We have $d(x_{2,3},v_1)=3$ and $d(x_{2,3},v_2)=1$, and so $y,z$ are identified by $x_{2,3}$.

\bigskip
\noindent
{\bf Case 2}: $d(x_{1,2},y)=d(x_{1,2},z)=2$.
We may assume that $y,z\ne x_{2,3}$, otherwise $x_{2,3}$ distinguishes
$y$ and $z$.
If $y=x_{1,j}$ and $z=x_{2,k}$, then $d(x_{2,3},z)=2$ while $d(x_{2,3},y)\ge 2$
and the equality is attained ony if $j=3$.
But, in that case $v_k$ is at distance at most $1$ from a vertex of $T$ and
this vertex distinguishes $y$ and $z$.
So, we may consider the case when $y=x_{2,j}$ and $z=x_{2,k}$, where $j<k$.
The case when $y=x_{1,j}$ and $z=x_{1,k}$ can be solved analogously.
We also assume that $j,k\ne 3$, otherwise $x_{2,3}$ distinguishes $y$ and
$z$. If $v_k\in T$, then this vertex distinguishes $y$ and $z$.
Thus, it remains to consider the case when $v_j$ and $v_k$ are at distance
$1$ from vertices of $T'$.
But then there is such a vertex of $T'$ which is at distance
$1$ from one of $v_j,v_k$ and at distance at least $3$ from the other.
And this vertex distinguishes $y$ and $z$. This establishes the case.

\bigskip

Using analogous arguments for the remaining vertices of $T'$, we see that the only
vertices of $S(G)$ which may not be distinguished by $T$ are at distance
at least $3$ from every vertex of $T'$.
However, there is not such a vertex if $n\equiv 0\pmod 3$.
If $n\equiv 1\pmod 3$, then there is just one such vertex, namely $v_n$.
Finally, if $n\equiv 2\pmod 3$ it remains to consider three vertices
(or two if $v_{n-1}v_n\notin E(G)$), namely $v_{n-1}$, $x_{n-1,n}$ and $v_n$.
But these vertices are distinguished by either of $v_{n-1}$ and $v_n$.
And since $v_{n-1},v_n\in T$, the set $T$ is a metric generator for $S(G)$ also
in this case. This concludes the proof.
\end{proof}

We next present an analogous result of the previous one, for the edge metric dimension.

\begin{theorem}
\label{thm:ebound}
Let $G$ be a graph on $n$ vertices.
If $G$ contains $\lfloor\frac{n-1}3\rfloor$ vertex-disjoint paths of length
$2$, then $\edim(S(G))\le\lceil\frac{2n-2}3\rceil$.
\end{theorem}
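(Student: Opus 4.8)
The plan is to mimic the proof of Theorem~\ref{thm:bound}, now resolving the edges (equivalently, the half-edges) of $S(G)$ rather than its vertices. Order the vertices of $G$ as $v_1,\dots,v_n$ so that the $\lfloor\frac{n-1}3\rfloor$ vertex-disjoint paths of length $2$ are $v_1v_2v_3,\,v_4v_5v_6,\dots,v_{\alpha-2}v_{\alpha-1}v_\alpha$ with $\alpha=3\lfloor\frac{n-1}3\rfloor$, and as before write $x_{i,j}$ for the subdivision vertex of an edge $v_iv_j\in E(G)$. Put $T'=\{x_{1,2},x_{2,3},x_{4,5},x_{5,6},\dots,x_{\alpha-2,\alpha-1},x_{\alpha-1,\alpha}\}$, the two subdivision vertices of each path, and set $T=T'$ if $n\equiv1\pmod3$, $T=T'\cup\{v_n\}$ if $n\equiv2\pmod3$, and $T=T'\cup\{v_{n-1},v_n\}$ if $n\equiv0\pmod3$. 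A direct count gives $|T|=\lceil\frac{2n-2}3\rceil$ in every case, so it remains to show that $T$ is an edge metric generator for $S(G)$.

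The organizing fact I would establish first is a clean distance profile: for any $x_{a,a+1}\in T'$ and any half-edge $v_ix_{i,j}$ of $S(G)$, the distance from $x_{a,a+1}$ to $v_ix_{i,j}$ equals $0$ if $\{i,j\}=\{a,a+1\}$, equals $1$ if the near-vertex $v_i\in\{v_a,v_{a+1}\}$ but $\{i,j\}\neq\{a,a+1\}$, equals $2$ if the far-vertex $v_j\in\{v_a,v_{a+1}\}$ while $v_i\notin\{v_a,v_{a+1}\}$, and is at least $3$ otherwise. Calling a half-edge \emph{remote} when both original end-vertices $v_i,v_j$ of the edge it belongs to are leftover vertices $v_{\alpha+1},\dots,v_n$, and \emph{local} otherwise, this profile shows every local half-edge is at distance at most $2$ from some vertex of $T'$ while every remote half-edge is at distance at least $3$ from all of $T'$. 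Consequently a local and a remote half-edge are automatically separated by the $T'$-vertex close to the local one, so it suffices to resolve local half-edges among themselves and remote half-edges among themselves.

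For the local half-edges I would run the analogue of Cases~$1$ and~$2$ of Theorem~\ref{thm:bound}. The two halves $v_1x_{1,2}$ and $x_{1,2}v_2$ of a path-edge both sit at distance $0$ from $x_{1,2}$, but are split by the partner $x_{2,3}$ (distances $2$ and $1$), and the halves of $v_2v_3$ are split symmetrically by $x_{1,2}$. Two half-edges sharing a near-vertex $v_i$ but with distinct far-vertices $v_j\neq v_k$ are separated by a $T$-vertex close to $v_j$ or $v_k$: if, say, $v_k$ is a path vertex, then $x_{k,k\pm1}$ gives distance $2$ to $v_ix_{i,k}$ and at least $3$ to $v_ix_{i,j}$, exactly the far-endpoint argument of Theorem~\ref{thm:bound}. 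Finally, the remote half-edges all lie within the leftover set; since in each residue class we keep all but one of the leftover vertices in $T$, every remote half-edge is incident with at least one added vertex $v_\ell$, which is at distance $0$ from a half-edge leaving it (one of the form $v_\ell x_{\ell,j}$) and at distance $1$ from one entering it. A short finite check (there are at most six remote half-edges, coming from the at most three edges among three leftover vertices) then shows the added vertices assign pairwise distinct distance vectors; the case $n\equiv1$ is vacuous, as there is then a single leftover vertex and hence no remote half-edge.

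I expect the main obstacle to be the local case analysis rather than the remote one. Establishing the distance profile requires checking that two subdivision vertices lie at distance $2$ exactly when they share an original end-vertex (and at least $4$ otherwise), and the separation of half-edges with a common near-vertex must be pushed through every sub-case — in particular when a far-vertex is the single leftover vertex omitted from $T$, so that no added vertex sits on it and one must instead exploit that the competing far-vertex is covered by the path system. Ensuring the path-covering hypothesis is invoked correctly in each such sub-case, so that every relevant far-vertex is either a path vertex or an added leftover vertex, is the delicate point; the cardinality bookkeeping and the remote-edge verification are then routine.
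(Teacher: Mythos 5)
Your proposal is correct and takes essentially the same route as the paper: the same set $T$ (up to relabeling which leftover vertex is omitted), the same partner-vertex argument (e.g.\ $x_{2,3}$ splitting the two halves of $v_1v_2$), the same far-vertex argument for half-edges sharing a near-vertex, and the same finite check on the at most six half-edges among the leftover vertices. The one family of pairs your enumeration leaves implicit --- two local half-edges with distinct near-vertices that are not the two halves of a single path edge (the paper's first sub-case of its Case 2) --- is settled in one line by your distance profile together with the partner vertex, so this is a presentational omission rather than a gap.
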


\begin{proof}
Analogously as in the proof of Theorem~{\ref{thm:bound}}, we construct
a set $T$ of the required size, and show that $T$ is an edge metric generator for $S(G)$.
As in Theorem~{\ref{thm:bound}}, we use a similar notation for the vertices of $G$, $S(G)$, and the set $T'$,
%note the vertices of $G$ by $v_1,v_2,\dots,v_n$, so that the paths of
%length $2$ are $v_1v_2v_3$, $v_4v_5v_6$,$\dots$,$v_{\alpha-2}v_{\alpha-1}v_{\alpha}$,
but now considering $\alpha=3\lfloor\frac{n-1}3\rfloor$,
%Further, if $v_iv_j\in E(G)$, then denote by $x_{i,j}$ the vertex of degree $2$ connected to $v_i$ and $v_j$ in $S(G)$.
that is $T'=\{x_{1,2},x_{2,3},x_{4,5},x_{5,6},\dots,x_{\alpha-2,\alpha-1},x_{\alpha-1,\alpha}\}$.
Now, let $T=T'$ if $n\equiv 1\pmod 3$; $T=T'\cup\{v_{n-1}\}$ if $n\equiv 2\pmod 3$;
and $T=T'\cup\{v_{n-2},v_{n-1}\}$ if $n\equiv 0\pmod 3$.
Then $v_n$ is not in $T$ and it is not adjacent to a vertex of $T$.
Obviously, $|T|=\lceil\frac{2n-2}3\rceil$.
It remains to show that $T$ is a metric generator for $S(G)$.

Consider the vertex $x_{1,2}$ of $T$.
This vertex partitions the set of edges of $S(G)$ into several sets according to their
distance from $x_{1,2}$.
Obviously, $x_{1,2}$ distinguishes edges belonging to different sets.
Thus, we examine pairs of edges, say $e$ and $f$, belonging to a common set.
It suffices to consider the following three cases.

\bigskip
\noindent
{\bf Case 1}: $d(x_{1,2},e)=d(x_{1,2},f)=0$. Hence, $e=v_1x_{1,2}$ and $f=v_2x_{1,2}$.
We have $d(x_{2,3},e)=2$ and $d(x_{2,3},f)=1$, and so $x_{2,3}$ distinguishes $e,f$.

\bigskip
\noindent
{\bf Case 2}: $d(x_{1,2},e)=d(x_{1,2},f)=1$. If $e=v_1x_{1,j}$ and $f=v_2x_{2,k}$, then
$d(x_{2,3},f)=1$ and $d(x_{2,3},e)\ge 2$, and so, again $x_{2,3}$ distinguishes $e,f$.
Assume that $e=v_2x_{2,j}$ and that $f=v_2x_{2,k}$, where $j<k$.
The case when both $e$ and $f$ are incident to $v_1$ is analogous.
We may assume that $j,k\ne 3$, otherwise $x_{2,3}$ distinguishes $e$ and
$f$. If $k=n$, then there is a vertex $z\in T$ which is at distance at most $1$
from $v_j$ and this vertex distinguishes $e$ and $f$.
If $v_k\in T$, then $v_k$ distinguishes $e$ and $f$.
Thus, it remains to consider the case when $v_j$ and $v_k$ are at distance $1$
from vertices of $T'$.
But then there is a vertex in $T'$ which is at distance $1$ from one of
$v_j,v_k$ and at distance at least $3$ from the other.
And this vertex distinguishes $e$ and $f$.

\bigskip
\noindent
{\bf Case 3}: $d(x_{1,2},e)=d(x_{1,2},f)=2$.
This case can be solved analogously as the previous one, so we omit the
discussion.

\bigskip

Using analogous arguments for the remaining vertices of $T'$, we see that the only
edges of $S(G)$ which may not be distinguished by $T$ are at distance
at least $3$ from every vertex of $T'$.
However, there is not such an edge if $n\equiv 1\pmod 3$.
If $n\equiv 2\pmod 3$, then the edges are $v_{n-1}x_{n-1,n}$ and $v_nx_{n-1,n}$
(if $v_{n-1}v_n\in E(G)$), and since $v_{n-1}\in T$, these edges are
distinguished by $T$.
Finally, if $n\equiv 0\pmod 3$, then there are at most three edges connecting
pairs of vertices of $\{v_{n-2},v_{n-1},v_n\}$ in $G$.
If all the three edges $v_{n-2}v_{n-1}$, $v_{n-2}v_n$ and $v_{n-1}v_n$ are in
$G$, then it remains to consider six edges of the
6-cycle $C=v_{n-2}x_{n-2,n-1}v_{n-1}x_{n-1,n}v_n x_{n-2,n}$ in $S(G)$.
Since $v_{n-2},v_{n-1}\in T$ and these two vertices form an edge metric basis for
$C$, the set $T$ is an edge metric generator for $S(G)$.
The other cases (when $0$, $1$ or $2$ edges of
$v_{n-2}v_{n-1},v_{n-2}v_n,v_{n-1}v_n$ are in $G$) are even simpler.
\end{proof}

The bounds in Theorems~{\ref{thm:bound}} and~{\ref{thm:ebound}} can be violated
if $G$ does not contain a required number of vertex-disjoint paths.
For instance, the complete bipartite graph $K_{n-1,1}$ has $n$ vertices and
there are not two vertex-disjoint paths of length $2$ in $K_{n-1,1}$.
It is easy to see that
$\dim(S(K_{n-1,1}))=\edim(S(K_{n-1,1}))=n-2>\lceil\frac{2n}3\rceil$ if $n\ge 9$.

% -------------------------------------------------------------------------

\subsection{Subdivisions of complete graphs minus a matching}

In the next we show that there are graphs of order $n$ for which the bounds in
Theorems~{\ref{thm:bound}} and~{\ref{thm:ebound}} are sharp.
If $n\not\equiv 0\pmod 3$, then these graphs have edge metric dimension smaller than
the metric dimension.

Let $K^k_n$ be a graph obtained from the complete graph $K_n$ on $n$ vertices
by deleting $k$ independent edges. We have the following statement.

\begin{theorem}
\label{thm:SG}
Let $k\ge 0$ and let $n\ge\max\{4, 3k+4\ell-2\}$, where $\ell=n\!\!\mod 3$.
Then $\edim(S(K^k_n))=\left\lceil\frac{2n-2}3\right\rceil$ and
$\dim(S(K^k_n))=\left\lceil\frac{2n}3\right\rceil$.
\end{theorem}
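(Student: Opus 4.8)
The plan is to exploit a very clean description of distances in $S(K^k_n)$ and then bound $\dim$ and $\edim$ from above and below separately. First I would label each branch vertex $v_i$ by the singleton $\{i\}$ and each subdivision vertex $x_{i,j}$ by the pair $\{i,j\}$. A short case check shows that for two vertices $u,w$ with labels $L_u,L_w$ one has $d_{S(K^k_n)}(u,w)=|L_u\triangle L_w|$, the only exception being a pair of branch vertices whose indices form one of the $k$ deleted edges, where the distance is $4$ rather than $2$; since $n\ge 3k+4\ell-2$ keeps the matching small, every relevant short path survives. A consequence I would isolate at once is a parity observation: a singleton and a pair have symmetric differences of opposite parity against any fixed label (and the exceptional value $4$ is even), so no single landmark ever confuses a branch vertex with a subdivision vertex. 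Hence a set resolves $S(K^k_n)$ if and only if it resolves the branch vertices among themselves and the subdivision vertices among themselves, and I would treat these two tasks in parallel.

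For the upper bounds I would merely verify the hypotheses of Theorems~\ref{thm:bound} and~\ref{thm:ebound}. The key point is that $K^k_n$ contains $\lfloor n/3\rfloor$ (respectively $\lfloor(n-1)/3\rfloor$) vertex-disjoint paths of length $2$: each deleted edge $\{a,b\}$ can be placed as the two end-vertices of such a path, since the missing edge $ab$ is never needed, and the remaining vertices of the almost-complete graph are partitioned into further paths. The inequality $n\ge 3k+4\ell-2$ is precisely what guarantees enough spare centres and leftover vertices to realise this packing in each residue class of $n$ modulo $3$. Invoking the two theorems then yields $\dim(S(K^k_n))\le\lceil 2n/3\rceil$ and $\edim(S(K^k_n))\le\lceil(2n-2)/3\rceil$.

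The substance of the proof is the matching lower bounds. Given a resolving set $S$, write $B$ for its branch vertices and $Y$ for its subdivision vertices, the latter viewed as the edge set of an auxiliary graph $\Gamma$ on $[n]$, and put $U=[n]\setminus B$. Ignoring the deleted edges for a moment, the distance formula shows that for indices in $U$ a branch landmark separates nothing, whereas a subdivision landmark $\{p,q\}$ separates $v_i$ from $v_j$, and likewise separates the subdivision vertices incident to $v_i$ from those incident to $v_j$, exactly when $\{p,q\}$ meets $\{i,j\}$ in a single element; consequently all vertices of $U$ must receive pairwise distinct stars in $\Gamma$. A counting lemma then gives that a graph with pairwise distinct stars on $m$ vertices has at least $2(m-1)/3$ edges, the deficit of $2/3$ being attainable only when $\Gamma$ has an isolated vertex. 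To exclude that vertex I would invoke the one remaining requirement, that two disjoint edges inside $U$ be separated: writing $\phi(i)$ for the star-vector of $i$, this is the Sidon-type condition $\phi(i)+\phi(j)\ne\phi(i')+\phi(j')$, and an isolated vertex $i_0$ forces such a collision (for instance $\{i_0,b\}$ and $\{a,c\}$ collide whenever $b$ is the centre of a path-component of $\Gamma$ with leaves $a,c$, since then $\phi(b)=\phi(a)+\phi(c)$). Hence $|Y|\ge\lceil 2|U|/3\rceil$ and $|S|=|B|+|Y|\ge |B|+\lceil 2(n-|B|)/3\rceil\ge\lceil 2n/3\rceil$. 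The edge metric version runs in parallel, with the half-edges $v_ix_{i,j}$ playing the role of vertices and a shift by one in the counting, producing $\lceil(2n-2)/3\rceil$.

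The main obstacle is exactly this lower bound, and within it the delicate interplay between the deleted matching and the branch landmarks. A deleted edge gives a branch landmark $v_p$ extra separating power over its partner, which threatens to relax the distinct-stars requirement and thereby push the bound below $2n/3$; the hypothesis $n\ge 3k+4\ell-2$ is what I would use to show that such savings can never accumulate enough to beat $\lceil 2n/3\rceil$. Pinning down the exact rounding in all three residue classes of $n$, and checking that the Sidon-type argument genuinely excludes the cheap isolated-vertex configurations in every arrangement of $B$, $\Gamma$ and the matching, is where the careful bookkeeping lies.
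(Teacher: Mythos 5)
Your setup is sound and, in fact, closely parallels the paper's proof: your auxiliary graph $\Gamma$ of subdivision landmarks is the paper's graph $H$, your counting lemma (components of size $\ge 3$ cost $\ge 2/3$ edge per vertex, no $K_2$-components, at most one isolated vertex) is exactly the paper's Claims 1--4 plus component count, and the upper bounds are obtained identically by invoking Theorems~\ref{thm:bound} and~\ref{thm:ebound} (though note the required path packing follows simply from Hamiltonicity of $K^k_n$ via Dirac, for all $n\ge 4$; the hypothesis $n\ge 3k+4\ell-2$ plays no role there). The genuine gap is in your lower bound for $\dim$, precisely at the step where you conclude $|Y|\ge\lceil 2|U|/3\rceil$ by ``excluding'' the isolated vertex. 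Your exclusion argument --- the collision between $x_{i_0,b}$ and $x_{a,c}$ when $a\mbox{-}b\mbox{-}c$ is a two-edge path component --- requires the vertex $x_{a,c}$ to \emph{exist}, i.e., requires $ac\in E(K^k_n)$. If $ac$ is one of the $k$ deleted edges, there is no collision at all, and an uncovered index can legitimately coexist with such a cheap two-edge component. Since each such component consumes one deleted edge, up to $k$ of them can occur simultaneously (this is the paper's Claim~5), and in that configuration the best one can say is $|S|\ge 2k+\lceil 3(n-3k-1)/4\rceil$: the $k$ exceptional components cost $2$ landmarks per $3$ vertices, and all remaining components must avoid both the $K_2$ and the two-edge-path shape, so they cost at least $3$ landmarks per $4$ vertices. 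The entire purpose of the hypothesis $n\ge 3k+4\ell-2$ is to make this quantity at least $\lceil 2n/3\rceil$ (the paper's inequality~(\ref{eq:1})); you flagged this comparison as ``careful bookkeeping'' to be done, but it is the actual mathematical content behind the hypothesis, and your asserted inequality $|Y|\ge\lceil 2|U|/3\rceil$ is false as stated once $k\ge 1$.

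A second, related leak: you derive the distinct-stars requirement from branch--branch pairs, but that requirement is itself weakened by the matching. If $pq$ is a deleted edge and $v_p\in B$, then $d(v_p,v_q)=4\ne 2$, so $v_p$ alone separates $v_q$ from every other branch vertex, and $v_q$ is exempt from needing a distinct star --- exactly the ``extra separating power'' you mention, but you never neutralize it. The paper avoids this by deriving the coverage requirement (at most one uncovered index) from \emph{subdivision--subdivision} pairs $x_{u,w}$ versus $x_{v,w}$, whose non-separation argument is robust to deleted edges: every landmark, including $v_p$, sits at equal distance from both. To repair your proof you would need to do the same, i.e., base both the coverage claim and the component-shape restrictions on collisions of subdivision vertices (your Sidon-type condition), and then carry out the $\beta$-versus-$\gamma$ case analysis in which the hypothesis on $n$ is actually consumed.
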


\begin{proof}
In order to simplify the notation, let us denote $S(K^k_n)$ by $G$.
We begin with $\edim(G)$, so let $S$ be an edge metric generator for $G$.
We construct an auxiliary graph $H$ containing some of the vertices of degree
at least $n-2$ in $G$.
If $v\in S$ and the degree of $v$, $\deg_G(v)$, in $G$ is at least $n-2$,
then we put the vertex $v$ to $H$.
If $x\in S$ and $\deg_G(x)=2$, then we put to $H$ the two neighbors of $x$,
together with the edge joining them.
Hence, if $S$ contains $s_1$ vertices of degree at least $n-2$ and $s_2$
vertices of degree $2$, then $H$ contains exactly $s_2$ edges and at most
$s_1+2s_2$ vertices, since some vertices of degree at least $n-2$ in $S$
can be adjacent to vertices of degree $2$ in $S$ and some pairs of vertices
of degree $2$ in $S$ can have distance $2$ in $G$.
We prove two claims about $H$.

\bigskip
\noindent
{\bf Claim~1}.
{\it $H$ contains at least $n-1$ vertices.}

\medskip
\noindent
Suppose that there are $2$ vertices of degree at least
$n-2$ in $G$, which are not in $H$.
Denote these vertices by $u$ and $v$.
By the assumption on $n$ in the statement, there is $w\in V(K_n^k)$ such that
$uw,vw\in E(K_n^k)$.
Let $y$ (resp. $z$) be the vertex of degree $2$ adjacent to both $w$ and $u$
(resp. $v$). Denote $e=yw$ and $f=zw$.

If $s\in S$ and $\deg_G(s)=2$, then either $s$ is adjacent to $w$, in which case
$d_G(s,e)=d_G(s,f)=1$; or $s$ is not adjacent to $w$, in which case
$d_G(s,e)=d_G(s,f)=3$, since one of the two neighbors of $s$ is connected to $w$
by an edge in $K_n^k$ and $s$ is adjacent to neither of $u$ and $v$.
On the other hand, if $s\in S$ and $\deg_G(s)\ge n-2$, then either
$sw\in E(K_n^k)$, in which case $d_G(s,e)=d_G(z,f)=2$; or $sw\notin  E(K_n^k)$,
in which case $d_G(s,e)=d_G(z,f)=3$, since $su,sv\in E(K_n^k)$.
Consequently, $e,f$ are not distinguished by $S$, a contradiction.
This proves the claim. %\hfill ($\blacksquare$)
\bigskip

\noindent
{\bf Claim~2}.
{\it If $uv\in E(H)$, then either $S$ contains one of $u$ and $v$, or
there is another edge adjacent to $uv$ in $H$.}

\medskip
\noindent
Suppose that $\deg_G(q)=2$ and $q\in S$.
Let $u$ and $v$ be the vertices of degree at least $n-2$ adjacent to $q$.
Moreover, suppose that $S$ does not contain $u,v$ and $S$ does not contain
a vertex of degree $2$ adjacent to $u$ or $v$ other than $q$.
By the assumption on $n$ in the statement, there is $w\in V(K_n^k)$ such that
$uw,vw\in E(K_n^k)$.
Let $y$ (resp. $z$) be the vertex of degree $2$ adjacent to both $w$ and $u$
(resp. $v$). Denote $e=yw$ and $f=zw$. We proceed analogously as above.

If $s\in S\setminus\{q\}$ and $\deg_G(s)=2$, then either $s$ is adjacent to $w$,
in which case $d_G(s,e)=d_G(s,f)=1$; or $s$ is not adjacent to $w$, in which case
$d_G(s,e)=d_G(s,f)=3$, since one of the two neighbors of $s$ is connected to $w$
by an edge in $K_n^k$, and $s$ is neither adjacent to $u$ nor to $v$.
On the other hand, if $s\in S$ and $\deg_G(s)\ge n-2$, then either
$sw\in E(K_n^k)$, in which case $d_G(s,e)=d_G(s,f)=2$; or $sw\notin  E(K_n^k)$,
in which case $d_G(s,e)=d_G(s,f)=3$, since $su,sv\in E(K_n^k)$.
Since $d_G(q,e)=d_G(q,f)=2$, we obtain that $e,f$ are not identified by $S$,
which is a contradiction. This establishes the claim. %\hfill ($\blacksquare$)
\bigskip

Now, let $C$ be a connected component of $H$.
If $C$ has only one vertex, then $C$ was created by using one vertex of degree at
least $n-2$ in $S$.
If $C$ has two vertices, then it was created by using one vertex of degree $2$
in $S$, and at least one vertex of degree at least $n-2$ in $S$, by Claim~2.
If $C$ has $t$ vertices, $t\ge 3$, then $C$ was created using at least $t-1$
vertices of degree $2$ in $S$ and maybe some vertices of degree at least
$n-2$ in $S$.
By Claim~1, $H$ contains at least $n-1$ vertices, and so $S$ contains
at least $2\lfloor\frac{n-1}3\rfloor+c$ vertices.
Here $c=0$ if $n\equiv 1\pmod 3$, $c=1$ if $n\equiv 2\pmod 3$ and
$c=2$ if $n\equiv 0\pmod 3$.
Hence $2\lfloor\frac{n-1}3\rfloor+c=\lceil\frac{2n-2}3\rceil$, and so
$|S|\ge\lceil\frac{2n-2}3\rceil$.

To prove that $\edim(G)\le\lceil\frac{2n-2}3\rceil$ it suffices to show
that there are $\lfloor\frac{n-1}3\rfloor$ vertex disjoint paths
of length $2$ in $G$, by Theorem~{\ref{thm:ebound}}.
But this is a consequence of the fact that $K^k_n$ has
a Hamiltonian cycle (by using the Dirac theorem).

\medskip

We next consider $\dim(G)$, so let $S$ be a metric generator for $G$.
Let $H$ be an auxiliary graph constructed as above.
We prove three claims about $H$ as before, where the first two are similar as before.

\bigskip
\noindent
{\bf Claim~3}. {\it $H$ contains at least $n-1$ vertices.}

\medskip
\noindent
Suppose that there are $2$ vertices $u$ and $v$ of degree at least $n-2$ in
$G$, which are not in $H$.
By the assumption on $n$ in the statement, there is $w\in V(K^k_n)$ such that
$uw,vw\in E(K^k_n)$.
Let $y$ (resp. $z$) be the vertex of degree $2$ adjacent to both $w$ and $u$
(resp. $v$).

If $s\in S$ and $\deg_G(s)=2$, then either $s$ is adjacent to $w$, in which case
$d_G(s,y)=d_G(s,z)=2$; or $s$ is not adjacent to $w$, in which case
$d_G(s,y)=d_G(s,z)=4$, since one of the two neighbors of $s$ is connected to $w$
by an edge in $K^k_n$ and $s$ is adjacent to neither of $u$ and $v$.
On the other hand, if $s\in S$ and $\deg_G(s)\ge n-2$, then either
$sw\in E(K^k_n)$, in which case $d_G(s,y)=d_G(s,z)=3$; or $sw\notin E(K^k_n)$,
in which case $d_G(s,y)=d_G(s,z)=3$, since $su,sv\in E(K^k_n)$.
Consequently, $y,z$ are not identified by $S$, a contradiction.
This proves the claim. %\hfill ($\blacksquare$)

\bigskip
\noindent
{\bf Claim~4}.
{\it If $uv\in E(H)$, then either $S$ contains one of $u$ and $v$, or
there is another edge adjacent to $uv$ in $H$.}

\medskip
\noindent
Suppose that $\deg_G(q)=2$ and that $q\in S$.
Let $u$ and $v$ be the vertices of degree at least $n-2$ adjacent to $q$.
Moreover, suppose that $S$ does not contain $u,v$ and $S$ does not contain
a vertex of degree $2$ adjacent to $u$ or $v$ other than $q$.
By the assumption on $n$ in the statement, there is $w\in V(K^k_n)$ such that
$uw,vw\in E(K^k_n)$.
Let $y$ (resp. $z$) be the vertex of degree $2$ adjacent to both $w$ and $u$
(resp. $v$). We proceed analogously as above.

If $s\in S\setminus \{q\}$ and $\deg_G(s)=2$, then either $s$ is adjacent to $w$, in which case
$d_G(s,y)=d_G(s,z)=2$; or $s$ is not adjacent to $w$, in which case
$d_G(s,y)=d_G(s,z)=4$, since one of the two neighbors of $s$ is connected to $w$
by an edge in $K^k_n$ and $s$ is neither adjacent to $u$ nor to $v$.
On the other hand, if $s\in S$ and $\deg_G(s)\ge n-2$, then either
$sw\in E(K^k_n)$, in which case $d_G(s,y)=d_G(s,z)=3$; or $sw\notin  E(K^k_n)$,
in which case $d_G(s,y)=d_G(s,z)=3$, since $su,sv\in E(K^k_n)$.
Since $d_G(q,y)=d_G(q,z)=2$, we have that $y,z$ are not distinguished
by $S$, a contradiction.
This proves the claim. %\hfill ($\blacksquare$)

\bigskip
\noindent
{\bf Claim~5}.
{\it If there is a vertex of $K^k_n$ which is not in $H$, then $H$ can
contain at most $k$ components with exactly two edges whose end vertices are
not in $S$.}

\medskip
\noindent
Suppose that there is a vertex of $K^k_n$, say $r$, which is not in $H$.
Let $C$ be a connected component of $H$ containing exactly $2$ edges
and let $V(C)\cap S=\emptyset$. Denote the edges of $C$ by $uv$ and $vw$.

Suppose first that $uw,vr\in E(K^k_n)$. Denote by $y$ the vertex of degree $2$ adjacent to $u$ and $w$, and denote by
$z$ the vertex of degree $2$ adjacent to $v$ and $r$ in $G$.  For any $s\in S$,  we consider two possibilities.
First, if $\deg_G(s)=2$, then either $s$ is one of the two vertices which caused an edge of $C$, in which case $d_G(s,y)=d_G(s,z)=2$;
or $s$ is not adjacent to a vertex of $\{u,v,w,r\}$, in which case $d_G(s,y)=d_G(s,z)=4$, since one of the two neighbors of $s$ is
connected to $r$ (and one of them is connected to $u$) by an edge in $K^k_n$.
Second,  if $s\in S$ and $\deg_G(s)\ge n-2$, then $sv\in E(K^k_n)$ or
$sr\in E(K^k_n)$, and also $su\in E(K^k_n)$ or $sw\in E(K^k_n)$, which
implies $d_G(s,y)=d_G(s,z)=3$. Thus,  in both possibilities, we conclude that $y,z$ are not identified by $S$,
a contradiction.

This means that at least one of the edges $uw$ and $vr$ must be missing in
$K^k_n$. Observe that, if $H$ has another component $C'$ on two edges, say $u'v'$ and
$v'w'$, with $V(C')\cap S=\emptyset$, then for the pairs $u'w'$ and
$v'r$ we have $\{u'w',v'r\}\cap\{uw,vr\}=\emptyset$.
Since there are only $k$ pairs of vertices which are not connected by an
edge in $K^k_n$, $H$ can have at most $k$ components with exactly
two edges whose end vertices are not in $S$.
This proves Claim 5. %\hfill ($\blacksquare$)
\bigskip

In consequence, there are two cases to consider.
If $|V(H)|=n-1$, then $|S|\ge \beta=2k+\lceil 3\frac{n-3k-1}4\rceil$.
On the other hand, if $|V(H)|=n$, then $|S|\ge \gamma=\lceil 2\frac n3\rceil$.
Observe that $\gamma\le\beta$ is equivalent with
\begin{equation}
\label{eq:1}
\Big\lceil\frac{2n}3\Big\rceil-1<\Big\lceil\frac{3n-k-3}4\Big\rceil.
\end{equation}
Let $n=3t+\ell$, where $\ell= n\mod 3$.
Then (\ref{eq:1}) is equivalent with
$$
2t+\ell-1<\frac{9t+3\ell-k-3}4,
$$
and consequently with
$$
k+\ell-1<t.
$$
Thus, $\gamma\le\beta$ is equivalent with $3k+4\ell-3<3t+\ell=n$, and so
with $n\ge 3k+4\ell-2$.
By the assumption on $n$ in the statement we get
$|S|\ge\lceil\frac{2n}3\rceil$.

To prove $\dim(S(K^k_n))\le\lceil\frac{2n}3\rceil$ it suffices to show that
there are $\lfloor\frac n3\rfloor$ vertex disjoint paths of length $2$ in
$K^k_n$, by Theorem~{\ref{thm:bound}}.
Analogously as above, this is a consequence of the fact that $K^k_n$ has
a Hamiltonian cycle.
\end{proof}

Since $3k+4\ell-2\le 3k+6$, we have the following corollary of
Theorem~{\ref{thm:SG}}.

\begin{cor}
\label{cor:SG}
Let $k\ge 1$ and let $n\ge 3k+6$.
Then $\dim(S(K^k_n))=\edim(S(K^k_n))$ if $n\equiv 0\pmod 3$ and
$\dim(S(K^k_n))=\edim(S(K^k_n))+1$ otherwise.
\end{cor}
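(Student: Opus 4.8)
The plan is to obtain the corollary as an immediate consequence of Theorem~\ref{thm:SG}, reducing the whole statement to a short computation of two ceiling functions. The first task is to confirm that the weaker-looking hypothesis $n\ge 3k+6$ is enough to invoke Theorem~\ref{thm:SG}, whose hypothesis reads $n\ge\max\{4,\,3k+4\ell-2\}$ with $\ell=n\bmod 3$. Because $k\ge 1$ forces $n\ge 9>4$, and because $\ell\le 2$ gives $3k+4\ell-2\le 3k+6$, the inequality $n\ge 3k+6$ implies both required lower bounds; this is precisely the observation recorded in the sentence immediately preceding the corollary.

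With the hypothesis verified, Theorem~\ref{thm:SG} supplies the exact formulas $\edim(S(K^k_n))=\lceil\frac{2n-2}{3}\rceil$ and $\dim(S(K^k_n))=\lceil\frac{2n}{3}\rceil$. The only remaining work is to evaluate the difference $\lceil\frac{2n}{3}\rceil-\lceil\frac{2n-2}{3}\rceil$ according to the residue of $n$ modulo $3$.

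For the final step I would write $n=3t+\ell$ and treat the three residues separately. When $\ell=0$, both $\frac{2n}{3}=2t$ and $\frac{2n-2}{3}=2t-\frac{2}{3}$ round up to $2t$, so $\dim(S(K^k_n))=\edim(S(K^k_n))$. When $\ell=1$ the two quantities round to $2t+1$ and $2t$, and when $\ell=2$ they round to $2t+2$ and $2t+1$; in each of these two cases the gap is exactly $1$, yielding $\dim(S(K^k_n))=\edim(S(K^k_n))+1$ whenever $n\not\equiv 0\pmod 3$.

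Since every step is either a direct substitution into Theorem~\ref{thm:SG} or an elementary ceiling evaluation, there is no genuine obstacle here; the only point meriting a moment's attention is the bound $3k+4\ell-2\le 3k+6$ used to transfer the hypothesis, and this follows at once from $\ell\le 2$.
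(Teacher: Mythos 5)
Your proposal is correct and is exactly the paper's argument: the authors also deduce the corollary directly from Theorem~\ref{thm:SG}, noting $3k+4\ell-2\le 3k+6$ to transfer the hypothesis, with the residue-by-residue ceiling computation left implicit. Your write-up merely makes those elementary evaluations explicit.
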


And for $S(K_n)$ we have the following statement.

\begin{proposition}
\label{prop:SK}
Let $n\ge 4$. Then $\edim(S(K_n))=\lceil\frac{2n-2}3\rceil$ and
$\dim(S(K_n))=\lceil\frac{2n}3\rceil$, with the unique exception when
$\dim(S(K_5))=3$ instead of $4$.
\end{proposition}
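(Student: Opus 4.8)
The crucial point is that $K_n=K_n^0$, so the statement is essentially the $k=0$ instance of Theorem~\ref{thm:SG}, and I would begin by determining exactly which $n$ it already covers. For $k=0$ the hypothesis $n\ge\max\{4,3k+4\ell-2\}$ reads $n\ge\max\{4,4\ell-2\}$ with $\ell=n\bmod 3$; this threshold equals $4$ when $\ell\in\{0,1\}$ and equals $6$ when $\ell=2$. Consequently every $n\ge 4$ with $n\ne 5$ satisfies the hypothesis, and for all such $n$ Theorem~\ref{thm:SG} delivers both equalities $\edim(S(K_n))=\lceil\frac{2n-2}3\rceil$ and $\dim(S(K_n))=\lceil\frac{2n}3\rceil$ simultaneously. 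The sole value of $n\ge 4$ that slips below its threshold is $n=5$, so the entire remaining argument is about $S(K_5)$.

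For the edge metric dimension of $S(K_5)$ no new idea is required. The upper bound $\edim(S(K_5))\le\lceil\frac 83\rceil=3$ is immediate from Theorem~\ref{thm:ebound}, as $K_5$ contains $\lfloor\frac{n-1}3\rfloor=1$ path of length $2$. For the matching lower bound I would simply reread the edge-metric half of the proof of Theorem~\ref{thm:SG} with $n=5$ and $k=0$: its Claims~1 and~2 invoke the hypothesis on $n$ only to guarantee a common neighbour $w$ of two given vertices of degree at least $n-2$, which is automatic in the complete graph $K_5$; the ensuing count then gives $|S|\ge 2\lfloor\frac{n-1}3\rfloor+c=3$. Hence $\edim(S(K_5))=3$, as the formula asserts.

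The genuine exception is the metric dimension of $S(K_5)$, where the formula value $\lceil\frac{10}3\rceil=4$ must be replaced by $3$. For the lower bound $\dim(S(K_5))\ge 3$ I would again restrict the proof of Theorem~\ref{thm:SG} to $n=5$, $k=0$: Claims~3,~4 and~5 stay valid (common neighbours exist, and for $k=0$ Claim~5 forbids any bad two-edge component outright), and since $V(H)$ consists only of vertices of $K_5$ we have $|V(H)|\in\{n-1,n\}=\{4,5\}$. The two exhaustive cases yield $|S|\ge\beta$ and $|S|\ge\gamma$, with $\beta=2k+\lceil 3\frac{n-3k-1}4\rceil=3$ and $\gamma=\lceil\frac{2n}3\rceil=4$. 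The step $\gamma\le\beta$ that Theorem~\ref{thm:SG} uses to conclude $|S|\ge\gamma$ now \emph{fails}, since $\beta=3<4=\gamma$; this failure is precisely what disqualifies $n=5$, and the case analysis only forces $|S|\ge\min\{\beta,\gamma\}=3$. For the upper bound I would display an explicit metric generator of size $3$, namely $S=\{x_{2,5},x_{3,5},x_{4,5}\}$, the subdivision vertices of three of the four edges incident with $v_5$. A direct check shows the five original vertices get the pairwise distinct vectors $(3,3,3),(1,3,3),(3,1,3),(3,3,1),(1,1,1)$; the ten subdivision vertices get vectors in $\{0,2,4\}^3$, pairwise distinct because the distance from $x_{i,j}$ to a landmark $x_{a,b}$ encodes $|\{i,j\}\cap\{a,b\}|$; and original and subdivision vertices never collide, their coordinates having opposite parities. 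Thus $\dim(S(K_5))\le 3$, and equality follows.

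I expect the only real obstacle to be this $n=5$ metric case. The point one must spot is that the lower-bound machinery of Theorem~\ref{thm:SG} does not break down when its hypothesis fails, it merely weakens from $\gamma$ to $\min\{\beta,\gamma\}$; the task is then to recognise that this weaker bound of $3$ is actually achieved and to certify it with a hand-built size-$3$ metric generator. The rest is residue bookkeeping together with direct citations of Theorems~\ref{thm:SG} and~\ref{thm:ebound}.
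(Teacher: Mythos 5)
Your proposal is correct and follows essentially the same route as the paper: for $n\ne 5$ both cite Theorem~\ref{thm:SG} with $k=0$ (your residue bookkeeping correctly isolates $n=5$ as the sole excluded case), and for $n=5$ both observe that only the metric-dimension step fails (because $\beta=3<4=\gamma$) while the lower bound $3$ survives, and then exhibit an explicit size-$3$ metric generator. Your generator $\{x_{2,5},x_{3,5},x_{4,5}\}$ is the paper's $\{x_{1,2},x_{1,3},x_{1,4}\}$ up to an automorphism of $K_5$ (three subdivision vertices of edges at a common vertex), with your explicit distance-vector check merely filling in what the paper leaves as ``easy to check.''
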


\begin{proof}
For $n\ne5$ the statement is a direct consequence of Theorem~{\ref{thm:SG}}.
For $n=5$ the only problem is that $4=\gamma>\beta=3$ in this case (see the
proof of Theorem~{\ref{thm:SG}}).
Using the notation of Theorem~{\ref{thm:bound}}, let
$T=\{x_{1,2},x_{1,3},x_{1,4}\}$.
Then the corresponding graph $H$ has $4$ vertices and it is easy to check
that $T$ is a metric generator for $S(K_5)$.
\end{proof}

By Proposition~{\ref{prop:SK}}, we have $\edim(S(K_{3t+1}))=2t$ and
$\dim(S(K_{3t+1}))=2t+1$ for every $t\ge 1$, and also $\edim(S(K_{3t+2}))=2t+1$ and $\dim(S(K_{3t+2}))=2t+2$ for every $t\ge 2$.
Hence, we have the following statement.

\begin{lemma}
\label{lem:difference1}
For every $c_1\ge 2$, $c_1\ne 3$, there is an integer $q$ such that
$\edim(S(K_q))=c_1$ and $\dim(S(K_q))=c_1+1$.
\end{lemma}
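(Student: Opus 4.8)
The plan is to read off both required families directly from the values recorded immediately before the statement, splitting on the parity of $c_1$. From Proposition~\ref{prop:SK} we have $\edim(S(K_{3t+1}))=2t$ and $\dim(S(K_{3t+1}))=2t+1$ for every $t\ge 1$, so the subdivided complete graphs $S(K_{3t+1})$ realize every \emph{even} value $c_1=2t\ge 2$ as an edge metric dimension whose metric dimension is exactly one larger. Concretely, given an even $c_1\ge 2$, I would set $t=c_1/2$ and $q=3t+1$; then $q\ge 4$ and $q\ne 5$, so Proposition~\ref{prop:SK} applies without exception and gives $\edim(S(K_q))=c_1$ and $\dim(S(K_q))=c_1+1$, as desired.

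For odd $c_1$ I would use the second family: again by Proposition~\ref{prop:SK}, $\edim(S(K_{3t+2}))=2t+1$ and $\dim(S(K_{3t+2}))=2t+2$, valid for $t\ge 2$. Hence setting $t=(c_1-1)/2$ and $q=3t+2$ realizes every odd value $c_1=2t+1\ge 5$ with a gap of one between the two dimensions. Together the two families cover all integers $c_1\ge 2$ except $c_1=3$, which establishes the existence claim for every admissible $c_1$.

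The only point requiring care is \emph{why} the value $c_1=3$ must be excluded, and this is precisely where the $K_5$ exception in Proposition~\ref{prop:SK} comes into play. The only way to produce $\edim(S(K_q))=3$ from the formula $\lceil\frac{2q-2}3\rceil$ is $q=5$, which forces $t=1$ in the odd family; but $S(K_5)$ is exactly the exceptional graph for which $\dim(S(K_5))=3$ rather than $4$, so the intended gap collapses to $0$. This is the reason the odd family is only usable for $t\ge 2$ (equivalently $c_1\ge 5$), and it confirms that $c_1=3$ genuinely cannot be attained by this construction. Since the whole argument is a parity bookkeeping on top of Proposition~\ref{prop:SK}, I do not expect any substantive obstacle beyond correctly tracking this single boundary case.
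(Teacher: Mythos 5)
Your proof is correct and follows essentially the same route as the paper, which likewise reads both families $S(K_{3t+1})$ (for even $c_1=2t$, $t\ge 1$) and $S(K_{3t+2})$ (for odd $c_1=2t+1$, $t\ge 2$) directly off Proposition~\ref{prop:SK}. Your extra remark explaining why $c_1=3$ is unattainable (the $K_5$ exception collapsing the gap) is a nice clarification that the paper leaves implicit.
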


\section{Proof of Theorem \ref{thm:main}}

Clearly, Lemma~{\ref{lem:difference1}} shows that
Theorem~{\ref{thm:main}} is true for the case when $\dim(G)-\edim(G)=1$.
In the following we consider the case with a larger difference.

Let $c_1\ge 4$ and $c_2\ge c_1+2$ be two integers. We are aimed to construct a graph $G_{c_1,c_2}$ for which $\edim(G_{c_1,c_2})=c_1$ and $\dim(G_{c_1,c_2})=c_2$. Let $k=c_2-c_1$ and observe that $k\ge 2$.
We first take $k$ graphs $G_1,G_2,\dots,G_k$ and connect them into a structure
that resembles a path. The graphs $G_1,G_2,\dots,G_{k-1}$ are isomorphic to $S(K_7)$, while $G_k$ is
isomorphic to a graph $S(K_q)$, for some integer $q$, satisfying $\edim(S(K_q))=c_1$.
Since $c_1\ge 4$, by Lemma~{\ref{lem:difference1}}, there always exists the required integer $q$.

Now we denote the vertices of degree at least $3$ in $G_i$ by
$v^i_1,v^i_2,\dots v^i_{|V(G_i)|}$, and we denote the vertex of degree $2$ adjacent to
$v^i_a$ and $v^i_b$ by $x^i_{a,b}$.
By using this notation, we add to the disjoint union of $G_1,G_2,\dots,G_k$ the
edges $x^i_{1,2}x^{i+1}_{2,3}$ and $x^i_{4,5}x^{i+1}_{5,6}$, $1\le i\le
k-1$, see Figure \ref{fig:G_k} for a sketch of the construction.

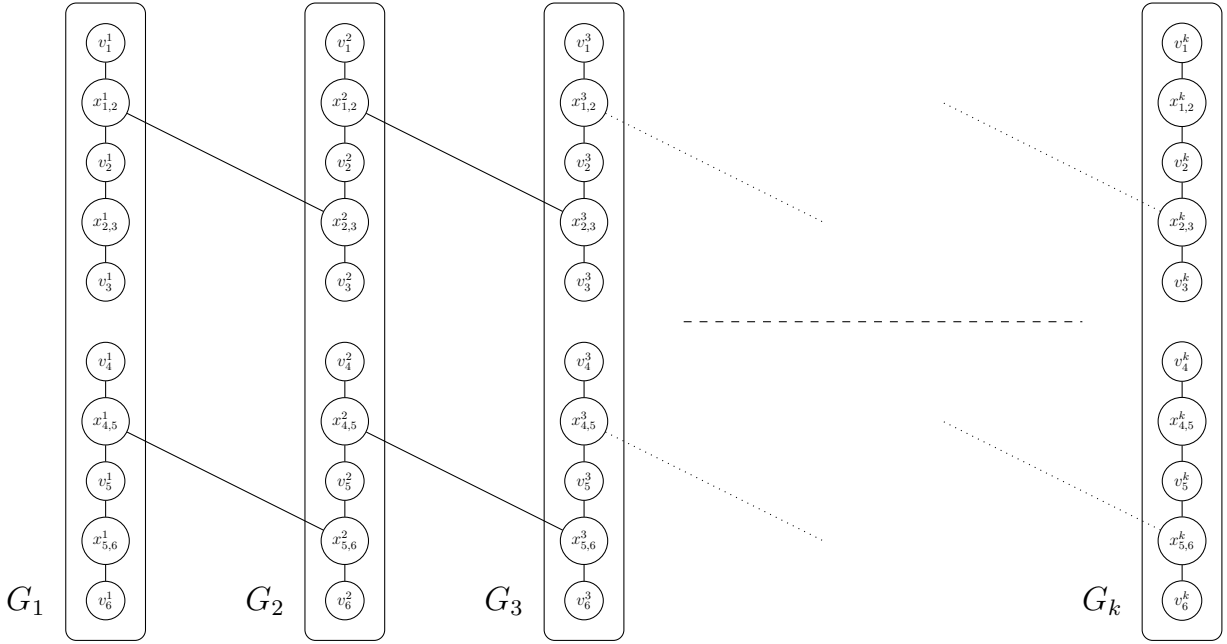
\begin{figure}
  \centering
\begin{tikzpicture}[scale=.53, transform shape]
\node [draw, shape=circle] (a1) at  (1,1) {$v_3^1$};
\node [draw, shape=circle] (a2) at  (1,4) {$v_2^1$};
\node [draw, shape=circle] (a3) at  (1,7) {$v_1^1$};
\node [draw, shape=circle] (b1) at  (1,-1) {$v_4^1$};
\node [draw, shape=circle] (b2) at  (1,-4) {$v_5^1$};
\node [draw, shape=circle] (b3) at  (1,-7) {$v_6^1$};

\node [draw, shape=circle] (s1) at  (1,2.5) {$x_{2,3}^1$};
\node [draw, shape=circle] (s2) at  (1,5.5) {$x_{1,2}^1$};
\node [draw, shape=circle] (s3) at  (1,-2.5) {$x_{4,5}^1$};
\node [draw, shape=circle] (s4) at  (1,-5.5) {$x_{5,6}^1$};

\draw[rounded corners] (0,-8) rectangle (2,8);
\node [scale=2] at (-1,-7) {$G_1$};

\draw(a3)--(s2)--(a2)--(s1)--(a1);
\draw(b3)--(s4)--(b2)--(s3)--(b1);

\node [draw, shape=circle] (a11) at  (7,1) {$v_3^2$};
\node [draw, shape=circle] (a21) at  (7,4) {$v_2^2$};
\node [draw, shape=circle] (a31) at  (7,7) {$v_1^2$};
\node [draw, shape=circle] (b11) at  (7,-1) {$v_4^2$};
\node [draw, shape=circle] (b21) at  (7,-4) {$v_5^2$};
\node [draw, shape=circle] (b31) at  (7,-7) {$v_6^2$};

\node [draw, shape=circle] (s11) at  (7,2.5) {$x_{2,3}^2$};
\node [draw, shape=circle] (s21) at  (7,5.5) {$x_{1,2}^2$};
\node [draw, shape=circle] (s31) at  (7,-2.5) {$x_{4,5}^2$};
\node [draw, shape=circle] (s41) at  (7,-5.5) {$x_{5,6}^2$};

\draw[rounded corners] (6,-8) rectangle (8,8);
\node [scale=2] at (5,-7) {$G_2$};

\draw(a31)--(s21)--(a21)--(s11)--(a11);
\draw(b31)--(s41)--(b21)--(s31)--(b11);

\node [draw, shape=circle] (a111) at  (13,1) {$v_3^3$};
\node [draw, shape=circle] (a211) at  (13,4) {$v_2^3$};
\node [draw, shape=circle] (a311) at  (13,7) {$v_1^3$};
\node [draw, shape=circle] (b111) at  (13,-1) {$v_4^3$};
\node [draw, shape=circle] (b211) at  (13,-4) {$v_5^3$};
\node [draw, shape=circle] (b311) at  (13,-7) {$v_6^3$};

\node [draw, shape=circle] (s111) at  (13,2.5) {$x_{2,3}^3$};
\node [draw, shape=circle] (s211) at  (13,5.5) {$x_{1,2}^3$};
\node [draw, shape=circle] (s311) at  (13,-2.5) {$x_{4,5}^3$};
\node [draw, shape=circle] (s411) at  (13,-5.5) {$x_{5,6}^3$};

\draw[rounded corners] (12,-8) rectangle (14,8);
\node [scale=2] at (11,-7) {$G_3$};

\draw(a311)--(s211)--(a211)--(s111)--(a111);
\draw(b311)--(s411)--(b211)--(s311)--(b111);

\draw(s11)--(s2);
\draw(s41)--(s3);
\draw(s111)--(s21);
\draw(s411)--(s31);
\draw[dotted](s211)--(19,2.5);
\draw[dotted](s311)--(19,-5.5);

\draw[dashed](15.5,0)--(25.5,0);

\node [draw, shape=circle] (a12) at  (28,1) {$v_3^k$};
\node [draw, shape=circle] (a22) at  (28,4) {$v_2^k$};
\node [draw, shape=circle] (a32) at  (28,7) {$v_1^k$};
\node [draw, shape=circle] (b12) at  (28,-1) {$v_4^k$};
\node [draw, shape=circle] (b22) at  (28,-4) {$v_5^k$};
\node [draw, shape=circle] (b32) at  (28,-7) {$v_6^k$};

\node [draw, shape=circle] (s12) at  (28,2.5) {$x_{2,3}^k$};
\node [draw, shape=circle] (s22) at  (28,5.5) {$x_{1,2}^k$};
\node [draw, shape=circle] (s32) at  (28,-2.5) {$x_{4,5}^k$};
\node [draw, shape=circle] (s42) at  (28,-5.5) {$x_{5,6}^k$};

\draw[rounded corners] (27,-8) rectangle (29,8);
\node [scale=2] at (26,-7) {$G_k$};

\draw(a32)--(s22)--(a22)--(s12)--(a12);
\draw(b32)--(s42)--(b22)--(s32)--(b12);

\draw[dotted](s12)--(22,5.5);
\draw[dotted](s42)--(22,-2.5);
\end{tikzpicture}
\caption{A sketch of a graph $G_{c_1,c_2}$ for some $c_1,c_2$ as described above. Note that only those vertices and edges which have influence on the construction have been drawn. Also, recall that $G_k$ might be different from the remaining $G_i$.}
\label{fig:G_k}
\end{figure}

In the proof of $\edim(G_{c_1,c_2})=c_1$ and $\dim(G_{c_1,c_2})=c_2$ we use
the following lemma.

\begin{lemma}
\label{lem:bound_sequence}
Let $G$ be a graph with a subgraph $H$.
Let $z_1,z_2,\dots,z_t$ be vertices of $H$ that have neighbors in
$V(G)\setminus V(H)$. If $\dim(H)=c$ $($resp. $\edim(H)=c$$)$,
then every $($resp. edge$)$ metric generator for $G$
contains at least $c-t$ vertices of $V(H)\setminus\{z_1,z_2,\dots,z_t\}$.
\end{lemma}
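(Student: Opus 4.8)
The plan is to turn a (edge) metric generator $S$ of $G$ into a resolving set of $H$ of controlled size, and then to compare that size with $\dim(H)=c$. Write $Z=\{z_1,\dots,z_t\}$ and $S_{\mathrm{in}}=S\cap(V(H)\setminus Z)$, and set $W=S_{\mathrm{in}}\cup Z\subseteq V(H)$, so that $|W|\le|S_{\mathrm{in}}|+t$. First I would prove that $W$ is a (edge) metric generator for $H$. Granting this, $c=\dim(H)\le|W|\le|S_{\mathrm{in}}|+t$, whence $|S_{\mathrm{in}}|\ge c-t$, which is exactly the asserted bound; the edge metric case is identical after replacing $\dim$ by $\edim$ and vertices by edges throughout.

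The heart of the argument is a projection step through the attachment vertices. Since $z_1,\dots,z_t$ are the only vertices of $H$ having neighbors in $V(G)\setminus V(H)$, any shortest path in $G$ from a landmark $s\notin V(H)$ to a vertex $a\in V(H)$ must enter $H$ for the first time at some $z_i$; taking that first gateway and using optimality of subpaths (together with the triangle inequality for the remaining $z_j$) gives $d_G(s,a)=\min_{1\le i\le t}\bigl(d_G(s,z_i)+d_G(z_i,a)\bigr)$. Hence, if such an $s$ separates two vertices $a,b\in V(H)$, the two minima differ, which forces $d_G(z_i,a)\ne d_G(z_i,b)$ for at least one $i$; that is, some gateway already separates $a$ and $b$. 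The same identity, now reading $d_G(s,\cdot)$ as distance to an edge, handles the edge version, with $a,b$ replaced by edges $e,f\in E(H)$.

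With the projection in hand the verification is routine. Take two distinct vertices $a,b\in V(H)$. As $S$ resolves $G$, some $s\in S$ satisfies $d_G(s,a)\ne d_G(s,b)$. If $s\in V(H)$, then $s\in S_{\mathrm{in}}\cup(S\cap Z)\subseteq W$ separates $a$ and $b$; if $s\notin V(H)$, the projection step yields a gateway $z_i\in Z\subseteq W$ that does. In either case a vertex of $W$ separates $a$ and $b$, so $W$ resolves $H$, and the count follows. The edge case proceeds verbatim with edges of $H$ in place of vertices.

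The step I expect to require the most care is precisely the passage between distances measured in $G$ and those internal to $H$: a priori an outside landmark could exploit a shortcut that leaves $H$ through one gateway and returns through another, and one must be sure this never lets the outside separate a pair of $H$ that the gateways themselves cannot. The projection identity is exactly what closes this gap, since it expresses every relevant $G$-distance from outside $H$ as a function of the gateway distances alone; making that identity precise — in particular that a shortest path from outside genuinely enters $H$ at a single well-defined gateway — is the only delicate point, the remainder being bookkeeping that transfers unchanged to the edge metric setting.
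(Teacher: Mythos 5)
Your proof is correct and is essentially the paper's own argument: your projection identity $d_G(s,a)=\min_i\{d_G(s,z_i)+d_G(z_i,a)\}$ is exactly the identity $d(z,y_1)=\min\{a_i+b_i\}$ at the heart of the paper's proof, and your reformulation that $W=S_{\mathrm{in}}\cup Z$ is a metric generator for $H$, hence $|W|\ge c$, is just a repackaging of the paper's conclusion that vertices outside $H$ cannot separate pairs of $V(H)$ that the gateways themselves fail to separate. The one point left implicit in your write-up --- that separation with respect to $d_G$ must still be converted into separation with respect to $d_H$ before invoking $\dim(H)=c$, since for $t\ge 2$ outside shortcuts between gateways can make $d_G$ and $d_H$ differ on $V(H)$ --- is left equally implicit in the paper, so it is not a gap relative to the paper's own proof.
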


\begin{proof}
We present the proof for $\dim(G)$ only, since for $\edim(G)$ it is
almost the same.
Consider the equivalence classes of $V(H)$ formed by distances from
$z_1,z_2,\dots,z_t$.
Two vertices, say $u_1$ and $u_2$, are in the same equivalence class if
$d(u_1,z_i)=d(u_2,z_i)$ for every $i$, $1\le i\le t$.

Now take two vertices, say $y_1$ and $y_2$, from the same equivalence class,
and take a vertex $z\in V(G)\setminus V(H)$.
Denote $a_i=d(z,z_i)$ and $b_i=d(z_i,y_1)$ ($=d(z_i,y_2)$), $1\le i\le t$.
Then $d(z,y_1)=\min\{a_i+b_i;1\le i\le t\}=d(z,y_2)$.
Hence, no  vertex outside $H$ can separate $y_1$ and $y_2$ by means of
distances.
Thus, even if all the vertices of $(V(G)\setminus
V(H))\cup\{z_1,z_2,\dots,z_t\}$ are in a metric generator, they will not
distinguish vertices of $H$ which are not distinguished by the set
$\{z_1,z_2,\dots,z_t\}$ alone.
Consequently, since $\dim(H)=c$, to distinguish vertices of $H$
every metric generator for $G$ must have at least $c-t$ vertices of
$V(H)\setminus\{z_1,z_2,\dots,z_t\}$.
\end{proof}

Now we determine the metric and edge metric dimensions of $G_{c_1,c_2}$,
which shall complete the proof of Theorem~{\ref{thm:main}}.

\begin{lemma}
\label{lem:c12}
For every pair of integers $c_1$ and $c_2$, such that $c_1\ge 4$ and
$c_2\ge c_1+2$, we have $\edim(G_{c_1,c_2})=c_1$ and
$\dim(G_{c_1,c_2})=c_2$.
\end{lemma}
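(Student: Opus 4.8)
The plan is to prove the two equalities by matching lower and upper bounds, handling each block separately. For the lower bounds I would apply Lemma~\ref{lem:bound_sequence} with $H=G_i$ inside $G=G_{c_1,c_2}$. The vertices of $G_i$ having a neighbor outside $G_i$ are precisely its connection vertices: for the end blocks there are $t=2$ of them (namely $x^1_{1,2},x^1_{4,5}$ in $G_1$ and $x^k_{2,3},x^k_{5,6}$ in $G_k$), while for each interior block $G_i$, $2\le i\le k-1$, there are $t=4$ of them ($x^i_{1,2},x^i_{4,5},x^i_{2,3},x^i_{5,6}$). Since the blocks are pairwise vertex-disjoint, the lower bounds contributed by distinct blocks add without overlap.

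For the edge metric dimension, recalling $\edim(S(K_7))=4$ (Proposition~\ref{prop:SK}) and $\edim(G_k)=c_1$ by construction, Lemma~\ref{lem:bound_sequence} forces any edge metric generator to contain at least $4-2$ interior vertices of $G_1$, at least $4-4=0$ of each interior block, and at least $c_1-2$ of $G_k$, so $\edim(G)\ge 2+(c_1-2)=c_1$. For the metric dimension, using $\dim(S(K_7))=5$ and $\dim(G_k)=c_1+1$ (Lemma~\ref{lem:difference1}), the same argument yields at least $5-2$ vertices in $G_1$, at least $5-4=1$ in each of the $k-2$ interior blocks, and at least $c_1+1-2$ in $G_k$, giving $\dim(G)\ge 3+(k-2)+(c_1-1)=c_1+k=c_2$.

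For the upper bounds I would exhibit explicit generators of exactly these sizes. Every block is a copy of $S(K_7)$ (except $G_k$), whose edge metric basis is $\{x_{1,2},x_{2,3},x_{4,5},x_{5,6}\}$ (Theorem~\ref{thm:ebound}) and whose metric basis is this set together with $v_7$ (Theorem~\ref{thm:bound}). The construction glues the blocks along two symmetric spines, one through $x^i_{2,3},v^i_2,x^i_{1,2}$ and the edges $x^i_{1,2}x^{i+1}_{2,3}$, the other through $x^i_{5,6},v^i_5,x^i_{4,5}$ and the edges $x^i_{4,5}x^{i+1}_{5,6}$. The key point is that a generator vertex placed in one block can reach a different block only along these spines, entering it at one of its connection vertices, and that the cost of switching spines equals the in-block distance between the two entry vertices (both are $4$ in $S(K_7)$). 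This numeric coincidence makes the simulation exact: e.g. $d(x^1_{2,3},e)=3(i-1)+d_{G_i}(x^i_{2,3},e)$ for every edge $e$ of $G_i$, so $x^1_{2,3}$ reproduces inside $G_i$ exactly the distance vector of $x^i_{2,3}$ up to an additive constant. Accordingly I would take $S_E=\{x^1_{2,3},x^1_{5,6}\}\cup(B_k\setminus\{x^k_{2,3},x^k_{5,6}\})$ of size $c_1$, where $B_k$ is the edge metric basis of $G_k$, and $S_V=\{x^1_{2,3},x^1_{5,6},v^1_7\}\cup\{v^i_7:2\le i\le k-1\}\cup(B_k'\setminus\{x^k_{2,3},x^k_{5,6}\})$ of size $c_2$, where $B_k'$ is the metric basis of $G_k$. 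In both cases one checks that within each block all four connection vertices act as if present: $x^1_{2,3},x^1_{5,6}$ simulate $x^i_{2,3},x^i_{5,6}$ from the left and $x^k_{1,2},x^k_{4,5}\in B_k$ simulate $x^i_{1,2},x^i_{4,5}$ from the right (with the genuine interior vertex $v^i_7$ added in the metric case), so each block's edges (resp.\ vertices) are resolved by a simulated copy of its own basis.

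The main obstacle will be turning the simulation into a rigorous case analysis rather than relying on it heuristically. I would verify that for each pair to be separated the intended spine genuinely realizes the shortest route, using the triangle inequality together with the equality between the spine-switching penalty and the inter-entry distance, so that the additive-constant description is exact for all relevant edges and vertices. I would then split the verification, as in the proofs of Theorems~\ref{thm:bound} and~\ref{thm:ebound}, into same-block pairs (resolved by the simulated basis) and different-block pairs (separated because their distances to $x^1_{2,3}$, or to a fixed vertex of $B_k$, differ by the block offset $3(i-1)$ along a spine), paying minor extra attention to boundary cases where offset ranges of adjacent blocks overlap and to the edges and vertices lying on the connection edges themselves. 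Finally, I would record that $c_1\ge4$ forces the integer $q$ from Lemma~\ref{lem:difference1} to satisfy $q\ge7$, so that $B_k$ and $B_k'$ indeed contain $x^k_{2,3}$ and $x^k_{5,6}$ and the exceptional value $S(K_5)$ of Proposition~\ref{prop:SK} never occurs.
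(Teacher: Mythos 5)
Your lower-bound argument coincides exactly with the paper's: Lemma~\ref{lem:bound_sequence} applied blockwise with $t=2$ for $G_1$ and $G_k$ and $t=4$ for the interior blocks, summed over the pairwise disjoint blocks, gives $\edim(G_{c_1,c_2})\ge c_1$ and $\dim(G_{c_1,c_2})\ge c_2$; that part is correct, and the generating sets $S_E$, $S_V$ you propose are the same sets the paper uses. The genuine gap is in your verification of the upper bound. Its keystone claim --- that ``the cost of switching spines equals the in-block distance between the two entry vertices (both are $4$)'', so that the simulation is exact, i.e.\ $d(x^1_{2,3},e)=3(i-1)+d_{G_i}(x^i_{2,3},e)$ for every edge $e$ of $G_i$ --- is false. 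The two spines can be crossed through the middle of any block at cost $2$, because $v^i_2$ and $v^i_5$ have the common neighbor $x^i_{2,5}$. For example, $d(x^1_{2,3},x^2_{5,6})=5$ via the path $x^1_{2,3}\,v^1_2\,x^1_{2,5}\,v^1_5\,x^1_{4,5}\,x^2_{5,6}$, i.e.\ $3(i-1)+2$ for $i=2$, whereas your formula predicts $3(i-1)+4=7$; likewise the edge $v^i_5x^i_{5,6}$ lies at distance $3(i-1)+2$ from $x^1_{2,3}$, not $3(i-1)+3$. So $x^1_{2,3}$ does \emph{not} reproduce the distance vector of $x^i_{2,3}$ up to an additive constant: everything close to the opposite spine gets shortcut, and, as far as $x^1_{2,3}$ alone is concerned, the edge $v^i_5x^i_{5,6}$ becomes indistinguishable from edges at genuine in-block distance $2$ from $x^i_{2,3}$ (such as $x^i_{2,6}v^i_6$), which exact simulation would separate. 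Since your plan for making the argument rigorous explicitly invokes ``the equality between the spine-switching penalty and the inter-entry distance'', the verification as designed fails.

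This shortcut phenomenon is precisely what the paper's proof is organized around. There, the distance classes $0$ and $1$ from $x^1_{1,2}$ are read off from $x^k_{1,2}$ alone, but the class at distance $2$ is characterized \emph{jointly}: edges of $G_1$ at distance $a+2$ from $x^k_{1,2}$ \emph{and} at distance greater than $a$ from $x^k_{4,5}$, the second clause serving exactly to exclude the two shortcut edges incident with $x^1_{4,5}$, which are at distance $a+2$ from $x^k_{1,2}$ but only $a$ from $x^k_{4,5}$. So your sets are the right ones and the skeleton is repairable, but not by a one-vertex-simulates-one-vertex argument: you must use the two remote vertices on the two spines in tandem, with an explicit exclusion condition for the edges (and, in the vertex case, vertices) lying near the other spine, and then exploit, as the paper does, that only the distance classes $0$, $1$, $2$ ever need to be recovered (this is what the case analyses in the proofs of Theorems~\ref{thm:bound} and~\ref{thm:ebound} reduce to).
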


\begin{proof}
We start with the lower bound.
Since $\dim(S(K_7))=5$ and $\dim(S(K_q))=c_1+1$, every metric generator for
$G_{c_1,c_2}$ must contain at least $3$ vertices of
$V(G_1)\setminus\{x^1_{1,2},x^1_{4,5}\}$, at least $1$ vertex of
$V(G_i)\setminus\{x^i_{1,2},x^i_{2,3},x^i_{4,5},x^i_{5,6}\}$, $2\le i\le k-1$, and at least
$c_1-1$ vertices of $V(G_k)\setminus\{x^k_{2,3},x^k_{5,6}\}$, by
Lemma~{\ref{lem:bound_sequence}}.
Thus, every metric generator for $G_{c_1,c_2}$ must have at least
$3+(k{-}2)+(c_1{-}1)=c_2$ vertices, and so $\dim(G_{c_1,c_2})\ge c_2$.

Analogously, since $\edim(S(K_7))=4$ and $\edim(S(K_q))=c_1$, every edge
metric generator for $G_{c_1,c_2}$ must contain at least $2$ vertices of
$V(G_1)\setminus\{x^1_{1,2},x^1_{4,5}\}$ and at least $c_1-2$ vertices of
$V(G_k)\setminus\{x^k_{2,3},x^k_{5,6}\}$, by
Lemma~{\ref{lem:bound_sequence}}.
Thus, $\edim(G_{c_1,c_2})\ge c_1$.

For the upper bound we use the bases defined in the proofs of
Theorems~{\ref{thm:bound}} and ~{\ref{thm:ebound}}.
Observe that $G_i\cong S(K_r)$ where $r\ge 7$, so that all these bases
contain $x^i_{1,2},x^i_{2,3},x^i_{4,5},x^i_{5,6}$ in $G_i$.
We start with $\edim(G_{c_1,c_2})$.
Thus, we show that $T=\{x^1_{2,3},x^1_{5,6},x^k_{1,2},x^k_{4,5},\dots\}$
is an edge metric basis in $G_{c_1,c_2}$ (observe that $T$ is composed by
vertices only from $G^1$ and $G^k$).
For this we show that the partition of edges of $G_1$ via distances from
$x^1_{1,2}$ can be modelled by distances from $x^k_{1,2}$ and $x^k_{4,5}$.
And, by symmetry we can have a similar statement for $x^1_{4,5}$.
Indeed, let $d(x^k_{1,2},x^1_{1,2})=a$. In fact, $a=3(k-1)$.
Then $d(x^k_{4,5},x^1_{4,5})=a$ as well and
$d(x^k_{1,2},x^1_{4,5})=d(x^k_{4,5},x^1_{1,2})=a+2$.
Thus, edges of $G_1$ at distance $0$ from $x^1_{1,2}$ are those which are in
$G_1$ (see below) and at distance $a$ from $x^k_{1,2}$.
Edges of $G_1$ at distance $1$ from $x^1_{1,2}$ are those which are in
$G_1$ and at distance $a+1$ from $x^k_{1,2}$.
Finally (it suffices to consider edges at distance $0$, $1$ and $2$ from $x^1_{1,2}$; see the proof of Theorem~{\ref{thm:ebound}}), edges of $G_1$
at distance $2$ from $x^1_{1,2}$ are those which are in
$G_1$, at distance $a+2$ from $x^k_{1,2}$ and at distance greater than $a$
from $x^k_{4,5}$ (to avoid the two edges at distance $0$ from $x^1_{4,5}$).
Analogously, we can reconstruct the partition of $E(G_i)$ from any one of the
vertices of $G_i$ which have a neighbour outside $G_i$, $1\le i\le k$.

Hence, it remains to show how the distances from $x^1_{2,3}$, $x^1_{5,6}$,
$x^k_{1,2}$ and $x^k_{4,5}$ can be used to detect edges of $G_i$ and how to
identify the edges connecting $G_i$ with $G_{i+1}$, $1\le i\le k-1$.
As regards the edges connecting $G_i$ with $G_{i+1}$, observe that
$x^i_{1,2}x^{i+1}_{2,3}$ (resp. $x^i_{4,5}x^{i+1}_{5,6}$) is the unique edge at
distance $2+3(i{-}1)$ from $x^1_{2,3}$ (resp. $x^1_{5,6}$) and at distance
$2+3(k{-}i{-}1)$ from $x^k_{1,2}$ (resp. $x^k_{4,5}$).
Finally, edges of $G_1$ are those which are at distance at most $3$ from
both $x^1_{2,3}$ and $x^1_{5,6}$;
edges of $G_2$ are those which are not in
$E(G_1)\cup\{x^1_{1,2}x^2_{2,3},x^1_{4,5}x^2_{5,6}\}$ and at distance at
most $6$ from both $x^1_{2,3}$ and $x^1_{5,6}$; etc.
Hence, $T$ distinguishes all edges of $G_{c_1,c_2}$.
Since $|T|=c_1$, we have $\edim(G_{c_1,c_2})\le c_1$.

The proof of $\dim(G_{c_1,c_2})\le c_2$ is analogous.
\end{proof}

%\section{Concluding remarks and further work}
%
%In \cite{Knor2020} were presented graphs with many edge metric dimension arbitrary larger
%than mertric dimension.  The graph given there contains many leaves. In the paper we showed that there exist
%2-connected graphs with edge metric dimension arbitrary larger than mertric dimension. More precisely,
%we construct a 2-connected graph $G$ satisfying that $\dim(G)=a$ and $\edim(G)=b$ for every pair of integers $a,b$,
%where $4\le b<a$.  THe only left cases are $b=2$ and $c$. So we state them as an open problem.
%
%\begin{problem}
%Are there graphs $G$ with $\edim(G)=b$ and $\dim(G)=a$ for each $b\in\{1,2\}$ and $a>b$?
%\end{problem}
%
%Our graphs have many vertices of degree 2 and thus they are of connectivity 2. Regarding the
%graphs with higher connectivity we belive the following holds. If the bound holds then it is
%tight for infinitely many complete graphs.
%
%\begin{conjecture}
%For every 3-connected graph $G$, it holds
%        $$ \edim(G)\ge \dim(G)-1. $$
%\end{conjecture}

\vskip 1pc
\noindent{\bf Acknowledgements.}~~The first author (M. Knor) acknowledges the partial support by Slovak research grants VEGA 1/0142/17, VEGA 1/0238/19, APVV--15--0220, APVV--17--0428. The second author (R. \v Skrekovski) acknowledges the Slovenian research agency ARRS, program no.\ P1--0383 and project no. J1-1692. The last author (Ismael G. Yero) has been partially supported by the Spanish Ministry of Science and Innovation through the grant PID2019-105824GB-I00.

\end{document}